\documentclass{amsart} 
\usepackage{graphicx} 

\usepackage{hyperref}
\usepackage{float}
\hypersetup{
    colorlinks=true,
    citecolor=blue,
    linkcolor=blue,
    filecolor=magenta,      
    urlcolor=cyan,
    pdftitle={Braid closure union braid axis is minimal},
    pdfpagemode=FullScreen,
    }

\usepackage[margin=1in]{geometry}

\newcommand{\hatfl}{\widehat{HFL}}

\usepackage{amsmath,amssymb,amsthm, mathrsfs}
\usepackage{dsfont}
\usepackage[dvipsnames]{xcolor}
\usepackage{tikz}
    \usetikzlibrary{cd}
\usepackage{tikz-cd}
\allowdisplaybreaks
\usepackage{standalone}
\usepackage{comment}
\usepackage{wasysym} 
\usepackage{mathtools} 
\usepackage{bbm} 
\usepackage{stmaryrd} 
\usepackage{enumitem}

\theoremstyle{definition}
\newtheorem{theorem}{{Theorem}}[subsection]

\newtheorem{lemma}[theorem]{{Lemma}}
\newtheorem{proposition}[theorem]{{Proposition}}
\newtheorem{definition}[theorem]{{Definition}}
\newtheorem{corollary}[theorem]{{Corollary}}

\newtheorem{remark}[theorem]{{Remark}}

\newcommand{\bea}{\begin{enumerate}[label=(\alph*)]}
\newcommand{\ee}{\end{enumerate}}

\newcommand{\Q}{\mathbb{Q}}
\newcommand{\R}{\mathbb{R}}
\newcommand{\Z}{\mathbb{Z}}
\newcommand{\F}{\mathbb{F}} 

\renewcommand{\emptyset}{\varnothing} 

\newcommand{\rank}{\mathrm{rank}~}

\newcommand{\id}{\mathrm{id}}

\title{Braid closure union braid axis is ribbon concordance minimal}
\author[B.~Daniels]{Benjamin Daniels}
\address{Department of Mathematics, University of California Berkeley, Berkeley, CA 94720-3840, U.S.A.
}
\email{\href{mailto:benjamin_daniels@berkeley.edu}{benjamin\_daniels@berkeley.edu}}
\date{\today}

\begin{document}

\begin{abstract}
    We show that a ribbon concordance minimal fibered knot $K$ in $S^3$ can generate ribbon concordance minimal links by the addition of any braid closure in $S^3 \setminus K$.
    As a corollary, we show that any link $L$ in $S^3$ may be made ribbon concordance minimal by adding a single unknot linked with $L$.
    Our proofs use link Floer homology together with classical techniques.
\end{abstract}

\maketitle

\section{Introduction}
\label{sec:introduction}

A \textit{ribbon concordance} from a knot $K_0$ to a knot $K_1$ is an embedded annulus $R \subset S^3 \times I$ ($I$ is the unit interval) such that $R \cap \{0\} = K_0$, $R \cap \{1\} = K_1$, and the projection $S^3 \times I \to I$ restricts to a Morse function on $R$ with no index $2$ critical points.
In this case, we say that $K_0$ is \textit{ribbon concordant} to $K_1$, and write $K_0 \leq K_1$.
Introduced by Gordon in \cite{Gordon-ribcon}, the existence of a ribbon concordance $R$ from $K_0$ to $K_1$ is intuitively expected to bound the complexity of $K_0$ in terms of that of $K_1$.
This intuition has been formalized and corroborated by a suite of conjectures and results; see, for instance, \cite{Gordon-ribcon}, \cite{Gilmer-alexanderpoly}, \cite{Zemke-knotfloerribcon}, \cite{LevineZemke-khribcon}, \cite{BaldwinSivek-fiberedribconI}, \cite{BaldwinHanselmanSivek-fiberedribconII}, \cite{AgolRen-fiberedribcon}, and \cite{Sun-fiberedclasses}.
In particular, Agol proved in \cite{Agol-partialordering} that ribbon concordance provides a partial ordering on the set of knots in $S^3$ (justifying the notation ``$\leq$'').

In the past few years, authors have been considering natural generalizations of ribbon concordance for knots, notably \textit{ribbon $\Q$-homology cobordisms} for $3$-manifolds \cite{DaLidVVShea-ribhomcob}.
This paper will focus on ribbon concordance for links in $S^3$.
A \textit{ribbon concordance of links} from $L_0$ to $L_1$ is a disjoint union of ribbon concordances from the components of $L_0$ to the components of $L_1$.
Specifically, we will study \textit{ribbon concordance minimal links}.
A link $L$ is ribbon concordance minimal if the existence of a ribbon concordance from $L_0$ to $L$ forces $L_0 \cong L$.
Dunkerley found infinite families of examples of ribbon concordance minimal links in Theorems 1.9, 1.11, and 1.12 of \cite{Dunkerley-linkribcon}. 
We will contribute to this search with the following result:

\begin{theorem}
    \label{thm:braidclosureunionbraidaxisisminimal}
    A fibered knot $K$ is ribbon concordance minimal if and only if for every braid closure $\hat{\beta}$ in $S^3 \setminus K$, the link $K \cup \hat{\beta}$ is ribbon concordance minimal.
\end{theorem}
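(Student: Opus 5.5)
The two directions are quite different, and I would handle them separately. By ``braid closure in $S^3\setminus K$'' I read a closed braid with respect to the fibration of $S^3\setminus K$ with fiber a Seifert surface $F$: a closed multicurve transverse to every fiber page. The empty braid should be allowed, or at least one-strand braids, so that $K$ itself, or $K$ plus a meridian-like component, is covered.

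\emph{Only if} is the substantive direction. Suppose $K$ is ribbon concordance minimal and there is a ribbon concordance $R$ from $L_0$ to $L = K\cup\hat\beta$. The concordance pairs components, so $L_0 = K_0\cup B_0$, where $K_0$ is ribbon concordant to $K$ through the component $R_K$ of $R$. Forgetting the other annuli, $R_K$ is a ribbon concordance of knots, so minimality of $K$ gives $K_0\cong K$. This only identifies the knot type, though. We still need the whole link to agree.

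For the whole link I would use link Floer homology. By Zemke's argument, extended to links, a ribbon concordance $R$ makes $\hatfl(L_0)$ a direct summand of $\hatfl(L)$ via $F_{\bar R}\circ F_R=\id$, and this respects the multi-Alexander grading. Next I would look at the top Alexander grading in the $K$-coordinate, $A_K = g(K)$. Because $\hat\beta$ is braided relative to the fibration, $L$ is fibered, with fiber $F$ punctured by the braid strands. The top $A_K$-graded piece of $\hatfl(L)$ is then computed by sutured Floer homology of $S^3\setminus\nu L$ cut along this punctured fiber, which is a product. So that piece has rank one in the appropriate sense, or is a tensor power of $\F^2$ determined by the number of strands $n$.

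For $L_0$, the $A_K$-grading is bounded by the Thurston norm. Since $K_0\cong K$ has the same genus, and the component $R_K$ meets $S^3\times\{t\}$ in a way that lets the linking numbers of $K$ with $\hat\beta$ transfer, the summand property forces $\hatfl(L_0)$ to be nonzero in the same top grading with no larger rank. By Ozsv\'ath--Szab\'o / Ni detection of fiberedness for links, $L_0$ is then fibered with $K_0$'s fiber class meeting $B_0$ transversely. That is, $B_0$ is a braid closure in $S^3\setminus K_0\cong S^3\setminus K$ with the same number of strands.

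The remaining step, which I expect to be the main obstacle, is to show $B_0\cong\hat\beta$ as braids in the complement. Here I would use classical techniques. Since $R_K$ is a product concordance up to isotopy, I would try to show it is isotopic to $K\times I$ after an isotopy. Then the other annuli form a ribbon concordance inside $(S^3\setminus\nu K)\times I$, which fibers over $S^1$ with fiber $F\times I$. Taking the cover corresponding to the fiber gives $F\times\R\times I$. There $\hat\beta$ lifts to a braid in $F\times I$ in the ordinary sense. We can then apply the fact that braids, being in mapping-class-group or Markov-trivial position, are ribbon concordance minimal; this follows, for example, from the fundamental group surjectivity/injectivity of Gordon's argument, since $\pi_1$ of a braid complement in $F\times I$ is residually finite and Hopfian. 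The first thing I would try is Gordon's method: show $\pi_1$ of the complement of $L_0$ surjects onto that of $L$ and injects, using that the complement of $L$ is a fibered, hence Hopfian, Haken manifold. Then apply Gordon's theorem that a ribbon concordance with isomorphic groups and Haken complements is a product.

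\emph{If}: take $\hat\beta$ empty, or a single core curve of the fibration, for example a braid axis. If $K_0\le K$ is a knot ribbon concordance, push the concordance off a neighborhood of that core curve. This gives $K_0\cup\hat\beta'\le K\cup\hat\beta$, so minimality forces $K_0\cong K$.
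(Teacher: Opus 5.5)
\textbf{The gap is in the last step of the ``only if'' direction.} Your reduction to $K_0\cong K$ and your link Floer step follow the paper's Lemma 3.2 in outline, with three corrections:
\begin{enumerate}[label=(\roman*)]
\item the top $K$-Alexander grading is $m_K=g(K)+\alpha/2$, not $g(K)$;
\item the rank in that grading is $2^{n-1}$, where $n$ is the number of components of $L$, not the number of strands;
\item to get $m_{K_0}=m_K$ you need the Thurston-norm lower bound $m_{K_0}\ge g(K_0)+\ell(B_0,K_0)/2$, together with concordance invariance of linking number.
\end{enumerate}
After that, the rank sandwich and Martin's braid-axis detection show that $B_0$ is braided of index $\alpha$.

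What fails is the passage from this to $L_0\cong L$.
\begin{itemize}
\item Your main route assumes $R_K$ is isotopic to $K\times I$. Minimality of $K$ only identifies the knot type of $K_0$. It says nothing about how the annulus $R_K$ sits in $S^3\times I$, and you give no argument for this much stronger claim.
\item Even granting it, the other annuli form an arbitrary ribbon concordance in $(S^3\setminus\nu K)\times I$, not transverse to the fibration. Lifting to $F\times\R\times I$ does not produce ``a braid in $F\times I$''. Hopficity or residual finiteness do not give any ribbon-minimality statement.
\item Your fallback names the right framework, but Lemma 3.1 only gives $\pi_1(X_1)\twoheadrightarrow\pi_1(Y)$. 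The whole difficulty is injectivity. Hopficity of $\pi_1(X_1)$ cannot supply it, because there is no homomorphism $\pi_1(Y)\to\pi_1(X_1)$ to compose with.
\item Gordon's and Waldhausen's arguments yield homeomorphic complements preserving peripheral structure, not a product concordance.
\end{itemize}

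\textbf{The missing idea is to use the fibered class itself.} The paper's chain of steps is:
\begin{enumerate}[label=(\arabic*)]
\item Take infinite cyclic covers $\widetilde X_0,\widetilde X_1,\widetilde Y$ for the homomorphism sending the meridian of $K$ to $1$ and the braid meridians to $0$.
\item Your Floer step shows both links are braided of the same index about knots of the same genus. Hence $\widetilde X_0\cong\widetilde X_1\cong\Sigma_\alpha\times\R$, and the two have equal finite-dimensional $H_1(\,\cdot\,;\Q)$. This is exactly where ``same index'' is needed.
\item Gordon's argument gives $H_2(\widetilde Y)=0$.
\item Milnor duality for infinite cyclic covers gives a half-lives-half-dies count for $H_1(\partial\widetilde Y;\Q)\to H_1(\widetilde Y;\Q)$. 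Combined with surjectivity from $\widetilde X_1$, this forces $H_1(\widetilde X_1;\Q)\to H_1(\widetilde Y;\Q)$ to be an isomorphism.
\item Since $\pi_1(\widetilde X_1)$ is free, hence transfinitely nilpotent, Gordon's Stallings-type argument upgrades this to injectivity of $\pi_1(\widetilde X_1)\to\pi_1(\widetilde Y)$.
\item The five lemma then gives $\pi_1(X_1)\cong\pi_1(Y)$.
\item The injection $\pi_1(X_0)\hookrightarrow\pi_1(X_1)$ respects peripheral structure and is then an isomorphism (Dunkerley).
\item Waldhausen's theorem for Haken manifolds gives $L_0\cong L$.
\end{enumerate}

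\textbf{The converse is essentially fine, but the construction needs fixing.} You cannot in general ``push the concordance off'' $\gamma\times I$ for a fixed curve $\gamma$. The surfaces $R$ and $\gamma\times I$ in the $4$-manifold have algebraic intersection $\pm(\ell(K,\gamma)-\ell(K_0,\gamma))$, which need not vanish. Instead, carry a small meridian along a path on $R$ that avoids the critical points, as the paper does. This gives $K_0\cup\mu_0\le K\cup\mu$, and then $K_0\cup\mu_0\cong K\cup\mu$ forces $K_0\cong K$.
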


Examples of fibered knots which are ribbon concordance minimal are in relative abundance.
For instance, $K$ could be a fibered strongly quasi-positive knot (see Remark 3.4 of \cite{Boninger-positiveknots}).
Restricting $K$ to be the unknot yields the following corollary:

\begin{corollary}
    \label{cor:addingbraidaxistomakemin}
    One can add to any link $L$ in $S^3$ a single unknotted component $U$, linked nontrivially with $L$, whereby the resulting link $L \cup U$ is ribbon concordance minimal.
\end{corollary}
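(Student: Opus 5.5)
The plan is to reduce the statement to Theorem~\ref{thm:braidclosureunionbraidaxisisminimal} with $K$ taken to be the unknot $U$. I would carry out three steps.

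\textbf{Step 1: present $L$ as a braid closure around an unknot.} By Alexander's theorem, any link $L \subset S^3$ is isotopic to the closure $\hat{\beta}$ of some braid $\beta$ on $n \geq 1$ strands. Let $U$ be the braid axis of this closure. Then $U$ is an unknot in $S^3 \setminus L$, and $L$ is a braid closure in the complement of $U$ in exactly the sense required by the theorem. So $L \cup U$ has the form $K \cup \hat{\beta}$ with $K = U$.

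\textbf{Step 2: check that $U$ satisfies the hypotheses.} First, $U$ is fibered, with disk fibers. Second, $U$ is ribbon concordance minimal: if $K_0 \leq U$, then $K_0$ is the unknot. This follows from known bounds on ribbon concordance. For example, Zemke's result~\cite{Zemke-knotfloerribcon} says that $\widehat{HFK}(K_0)$ is a summand of $\widehat{HFK}(U) \cong \F$. Hence $\widehat{HFK}(K_0)$ has rank one and $K_0$ has Seifert genus zero, since knot Floer homology detects genus. Alternatively, Gordon's and Gilmer's results~\cite{Gordon-ribcon, Gilmer-alexanderpoly} give a genus or fundamental-group bound that yields the same conclusion.

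The forward direction of Theorem~\ref{thm:braidclosureunionbraidaxisisminimal} then applies and shows that $L \cup U = U \cup \hat{\beta}$ is ribbon concordance minimal.

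\textbf{Step 3: check the nontrivial linking.} Each component of $\hat{\beta}$ winds positively around the axis. Its linking number with $U$ is the number of strands of $\beta$ it uses, which is at least $1$. In particular the total linking number $\mathrm{lk}(U, L) = n \neq 0$, so $U$ is linked nontrivially with $L$, and indeed with every component of $L$.

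\textbf{Where the difficulty lies.} All the real content sits in Theorem~\ref{thm:braidclosureunionbraidaxisisminimal}, so the corollary itself is formal. The only step needing care is the minimality of the unknot in Step 2. I would settle it by citing the genus-detection argument above.
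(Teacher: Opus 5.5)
Your proposal is correct and follows the paper's proof exactly: use Alexander's theorem to present $L$ as a closed braid about an unknotted axis $U$, then apply the forward direction of Theorem~\ref{thm:braidclosureunionbraidaxisisminimal} with $K = U$. Your Steps 2 and 3 make explicit the checks the paper leaves implicit, namely that the unknot is fibered and ribbon concordance minimal (your argument via Zemke's grading-preserving injection into $\widehat{HFK}(U) \cong \F$ plus genus detection is fine), and that every component of the closed braid links the axis positively.
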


\begin{proof}
    There is always an unknotted axis $U$ such that $L$ is a braid closure with respect to $U$ by Alexander's theorem, concluding the proof by Theorem \ref{thm:braidclosureunionbraidaxisisminimal}.
\end{proof}

Corollary \ref{cor:addingbraidaxistomakemin} gives a positive answer to a strong form of Conjecture 7.6 in \cite{Dunkerley-linkribcon}.

\section{Preliminaries and notation}
\label{sec:preliminaries}

We begin by introducing the ideas we will make frequent use of in the proofs to follow.

\subsection{Topological background}
\label{subsec:topologicalbackground}

Recall that an (oriented) knot $K$ in $S^3$ is \textit{fibered} if $S^3 \setminus K$ is fibered over $S^1$ with each fiber being a Seifert surface for $K$.
In this case, an oriented link $B$ is said to be a \textit{braid closure of index $\alpha$} in $S^3 \setminus K$ if

\begin{itemize}
    \item $B$ can be isotoped so that it is always transverse to our fibration of $S^3 \setminus K$,
    \item The orientation on $B$ is such that $\Sigma \cdot B$ is maximized, and with this orientation we have an equality $\Sigma \cdot B = \alpha$.
\end{itemize}

When $K$ is considered as a component of a link $L$ such that $L \setminus K$ is a braid closure in the complement of $K$, we will sometimes refer to $K$ as a \textit{braid axis}.

Moving to ribbon concordance, we will write $L_0 \leq L_1$ to mean that there is a ribbon concordance from $L_0$ to $L_1$.
If we wish to name this ribbon concordance, we will reserve the notation $R: L_0 \to L_1$ to mean that $R$ is a ribbon concordance from $L_0$ to $L_1$.
Link groups provide powerful obstructions to ribbon concordance by the following observation of Gordon, later generalized by other authors.
Here, and going forward, $\nu X$ denotes a neighborhood of $X$.

\begin{lemma}[Lemma 3.1 of \cite{Gordon-ribcon}, Proposition 2.1 of \cite{DaLidVVShea-ribhomcob}, Lemma 4.5 of \cite{Dunkerley-linkribcon}]
    \label{lma:pi1injsurj}
    Suppose we have a ribbon concordance of links $R: L_0 \to L_1$.
    The inclusion maps $\iota_0,\iota_1: S^3 \setminus \nu L_0, S^3 \setminus \nu L_1 \to (S^3 \times I) \setminus \nu R$ induce an injection and a surjection on $\pi_1$ respectively.
\end{lemma}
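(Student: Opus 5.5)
Both claims follow from a handle decomposition of the concordance exterior $X = (S^3 \times I) \setminus \nu R$ relative to its ends. I will prove surjectivity by turning that decomposition upside down. I will prove injectivity by Gordon's doubling trick.

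\textbf{Handle decomposition.} Write $E_j = S^3 \setminus \nu L_j$ for $j = 0,1$. Since $R$ is a disjoint union of ribbon concordances of the components, the projection to $I$ restricts to a Morse function on $R$ with only index $0$ and index $1$ critical points. After a small isotopy these critical points lie at distinct levels. Since each component of $R$ is an annulus, its Euler characteristic is $0$, so $R$ has the same number $n$ of minima and saddles. Passing through a level containing a minimum of $R$ changes the exterior by attaching a $4$-dimensional $1$-handle: we carve out a neighborhood of a small disk. Passing a saddle of $R$ attaches a $2$-handle to the exterior, its attaching circle being a meridian-like curve around the band. Thus
\[
X \cong (E_0 \times I) \cup (n \text{ one-handles}) \cup (n \text{ two-handles}),
\]
and we may reorder so that all $1$-handles come first.

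\textbf{Surjectivity of $\iota_1$.} Turning the decomposition upside down, $X$ is obtained from $E_1 \times I$ by attaching $n$ dual $2$-handles and $n$ dual $3$-handles. Attaching handles of index at least $2$ can only add relations to $\pi_1$, never generators, so by van Kampen $\pi_1(E_1) \to \pi_1(X)$ is surjective.

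\textbf{Injectivity of $\iota_0$.} Let $\bar R$ be the mirror of $R$ in $S^3 \times I$, a concordance from $L_1$ to $L_0$. Form the composite concordance $D = R \cup_{L_1} \bar R$ from $L_0$ to itself. Its exterior is $W = X \cup_{E_1} \bar X$, which contains $X$ via the first factor.

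The key step is to identify $W$. Following Gordon (and Zemke's later reformulation), the saddles of $R$ cancel against the mirrored saddles of $\bar R$. After these cancellations, $D$ is isotopic rel boundary to $L_0 \times I$ tubed to $n$ unknotted, unlinked $2$-spheres, one for each local minimum of $R$. The exterior of such an unknotted $2$-sphere summand contributes an $S^1 \times S^3$ summand, so
\[
W \cong (E_0 \times I) \,\#\, n(S^1 \times S^3).
\]
Hence $\pi_1(W) \cong \pi_1(E_0) * F_n$. The composite
\[
\pi_1(E_0) \xrightarrow{\iota_0} \pi_1(X) \to \pi_1(W)
\]
is the inclusion of a free factor, hence injective, and therefore $\iota_0$ is injective.

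\textbf{Main obstacle.} I expect the cancellation argument identifying $W$ to be the hardest step. One must check carefully that each band of $R$ and its mirror image bound a canceling pair in the doubled picture, so that the handle slides produce exactly the free product. For links this is done componentwise, since the concordance is a disjoint union; the tubed spheres remain unlinked and unknotted from each other and from $L_0 \times I$.
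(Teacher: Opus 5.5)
Your surjectivity argument is correct and is the standard one. The injectivity argument has a genuine gap: the identification $W\cong (E_0\times I)\# n(S^1\times S^3)$ is false whenever $R$ has any births. The surface $D=\bar R\circ R$ is still a concordance, i.e.\ $|L_0|$ disjoint annuli in $S^3\times[0,2]$, so Alexander duality gives $H_1(W;\Z)\cong\Z^{|L_0|}$. Your manifold, by contrast, has $H_1$ of rank $|L_0|+n$.

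Geometrically, the $n$ spheres in $\bar R\circ R$ (birth, product, death of each new unknot) do bound disjoint $3$-balls. But the tubes run along the cores of the bands of $R$, and these cores may pierce the balls; that is exactly where the ribbon intersections of $R$ live. Moreover, tubing in a sphere never adds a free generator: the tube identifies the sphere's meridian with a conjugate of a meridian of $L_0\times I$. If the tubes did miss the balls, $D$ would just be isotopic to $L_0\times I$ and $W\cong E_0\times I$. That would make $\iota_0$ an isomorphism, which is false in general: for the standard ribbon concordance from the unknot to the square knot, $\pi_1(X)$ is the trefoil group.

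More fundamentally, no description of this double can yield injectivity. You have already shown $\iota_1$ is surjective, and $E_1$ includes into $X$ and into $\bar X$ by the same map up to the reflection. Van Kampen then gives $\pi_1(W)\cong\pi_1(X)*_{\pi_1(E_1)}\pi_1(\bar X)\cong\pi_1(X)$, with the inclusion $X\hookrightarrow W$ inducing the isomorphism. So showing that $\pi_1(E_0)\to\pi_1(W)$ is injective is literally the original problem.

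The paper does not reprove the lemma; it only cites it. The standard argument behind the cited results is algebraic:
\begin{enumerate}
\item Your handle decomposition gives a presentation $\pi_1(X)\cong\langle \pi_1(E_0),x_1,\dots,x_n\mid r_1,\dots,r_n\rangle$.
\item Since $H_1(E_0)\to H_1(X)$ is an isomorphism, the matrix of exponent sums of the $x_j$ in the $r_i$ is unimodular.
\item Link groups are residually finite.
\item The Gerstenhaber--Rothaus theorem then shows that $\pi_1(E_0)\to\pi_1(X)$ is injective.
\end{enumerate}
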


Lastly, we will recall the definition of the \textit{Thurston norm} in our setting.
For a compact surface $\Sigma = \bigcup_{i} \Sigma_i$ (possibly with boundary) embedded in a $3$-manifold $Y$, we can define the \textit{complexity} of $\Sigma$ by 
    \[\chi_-(\Sigma) = \sum_{\{i : \chi(\Sigma_i) \leq 0\}} -\chi(\Sigma_i).\]
Given an $n$-component link $L$, we will define
    \[x: H_2(S^3, L; \Z) \to \Z, \quad x(h) = \min_{\text{embedded surfaces $\Sigma \subset S^3 \setminus \nu L$, $[\Sigma] = h$}} \chi_-(\Sigma).\]
Thurston notes that $x$ may be extended to a function $H_2(S^3, L; \R) \to \R$, and to the \textit{dual Thurston norm} by the isomorphism $H^1(S^3 \setminus \nu L; \R) \cong H_2(S^3, L; \R)$.
We then obtain the \textit{dual Thurston norm ball} $B_{x^*} = \{h \in H^1(S^3 \setminus \nu L; \R) : x^*(h) \leq 1\}$.
It is proven in \cite{Thurston-norm} that $B_{x^*}$ is a convex polytope with remarkable properties.

\subsection{Link Floer homology}
\label{subsec:linkfloerhomology}

One of the main characters in our proof of Theorem \ref{thm:braidclosureunionbraidaxisisminimal} is \textit{link Floer homology}, introduced by Ozsv\'ath and Szab\'o in \cite{OzsvathSzabo-linkfloer}.
Link Floer homology is an invariant which assigns to an $n$-component link $L$ in $S^3$ an $n+1$-graded $\F_2$ vector space $\hatfl(L)$.
The first grading is the Maslov grading, which will not play a role in this paper, while the latter $n$ gradings $a_1, ..., a_n$ compile into a single vector-valued grading called the \textit{multi-Alexander grading}.
We then write
    \[\hatfl(L) = \bigoplus_{a_1, ..., a_n}\hatfl(L, a_1, ..., a_n).\]
In particular, we can think of $a_i$ as corresponding to the $i$th component $L_i$ of $L$, in which case $a_i$ satisfies
    \[2a_i + \ell(L_i, L \setminus L_i) \in 2\Z.\]
We will set $\hatfl(L, a_i)$ to mean the piece of $\hatfl(L)$ where the $i$th Alexander grading is $a_i$, and let 
    \[m_{L_i} = \max_{a_i} \{\hatfl(L, a_i) \neq 0\}.\]
For our purposes, we will identify $\R^n$ with $H_1(S^3 \setminus \nu L; \R)$, and view a multi-Alexander grading $(a_1, ..., a_n)$ as the element 
    \[\sum_i a_i [\mu_i] \in H_1(S^3 \setminus \nu L; \R),\]
where $\mu_i$ denotes the meridian of $L_i$.
This means that for an element $h \in H^1(S^3 \setminus \nu L; \R)$, we can speak of the Kronecker pairing $\langle h, (a_1, ..., a_n)\rangle \in \R$.

Link Floer homology enjoys a number of structural properties which make it conducive to proving Theorem \ref{thm:braidclosureunionbraidaxisisminimal}.
It is functorial with respect to \textit{decorated cobordisms} (see \cite{Zemke-knotfloerribcon} for terminology).
In turn, $\hatfl$ obstructs ribbon concordance by a theorem of Zemke, with the link Floer adaptation noted by Wang \cite{Wang-coscrosconsplitlinks}, Guth \cite{Guth-ribhomcoblinkfloer}, and Dunkerley \cite{Dunkerley-linkribcon}:

\begin{theorem}[Theorem 1.1 of \cite{Zemke-knotfloerribcon}]
    \label{thm:linkfloermonotonicity}
    Let $C$ be a decorated cobordism obtained from a ribbon concordance $R: L_0 \to L_1$ of links.
    Suppose that each component of $C$ is decorated with a single $\textbf{w}$ arc and a single $\textbf{z}$ arc.
    Then $\hatfl(C): \hatfl(L_0) \to \hatfl(L_1)$ is a grading-preserving injection.
\end{theorem}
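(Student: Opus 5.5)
The plan is to follow Zemke's doubling argument. The goal is to exhibit a left inverse for $\hatfl(C)$ built from the reversed concordance.

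\textbf{Step 1: the reversed cobordism.} Let $\bar R: L_1 \to L_0$ be the concordance obtained by turning $R$ upside down, i.e.\ reflecting $S^3\times I$ in the $I$-factor. It has only index $1$ and $2$ critical points. Decorate it as the mirror $\bar C$ of $C$, so that each annular component again carries one $\mathbf{w}$ arc and one $\mathbf{z}$ arc. By functoriality of link Floer homology under composition of decorated cobordisms,
\[\hatfl(\bar C)\circ \hatfl(C) = \hatfl(\bar C\circ C).\]
It therefore suffices to show that $\hatfl(\bar C \circ C) = \id_{\hatfl(L_0)}$.

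\textbf{Step 2: identify the double.} Since $R$ has no index $2$ critical points, choose a movie in which all births (index $0$) happen first, creating an unlink $U$ far from $L_0$. Then come fusion saddles (index $1$), each joining a component of $U$ to the rest.

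The double $D=\bar R\circ R \subset S^3\times[0,2]$ has the following structure.
\begin{itemize}
\item Each birth in $R$ and the corresponding death in $\bar R$ cap off to a $2$-sphere.
\item These spheres bound disjoint $3$-balls in the complement of the rest of $D$, so they form an unknotted, unlinked collection.
\item Each saddle together with its mirror forms a tube joining such a sphere to $L_0\times[0,2]$.
\end{itemize}
Up to isotopy rel boundary, $D$ is therefore the product concordance $L_0\times[0,2]$ with finitely many tubes attached to a split unlink of $2$-spheres. The tubes follow the bands of $R$, and may be knotted and linked with everything else.

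The decorations on $C\cup\bar C$ restrict to a single $\mathbf{w}$ arc and a single $\mathbf{z}$ arc on each component of $L_0\times[0,2]$. The tubes and spheres lie entirely in one region, say the $\mathbf{w}$-region. This is arranged by choosing the arcs on $C$ away from the band attachments.

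\textbf{Step 3: compute the map (the main obstacle).} This is the step I expect to be hardest. The key local computation is that tubing a decorated surface to an unknotted sphere lying in a single region does not change the cobordism map, even though the tube itself may be knotted.

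My plan is to isotope the tube so that the cobordism factors as follows:
\begin{itemize}
\item the birth of an unknot component with its own point decorations;
\item a saddle merging it with the link along the band;
\item the reverse saddle splitting it off again;
\item its death.
\end{itemize}
I would then use Zemke's explicit formulas. Birth and death maps are the canonical inclusion and projection tensoring with the unknot's $\hatfl$. The composite of a merge and a split saddle, both in the $\mathbf{w}$-region, is multiplication by the basepoint action $\Phi_w$. Pre- and post-composing with birth and death then yields the identity: the relation is $\text{death}\circ\Phi\circ\text{birth}=\id$. Knottedness of the band is irrelevant here because the band map is computed by a single handle attachment, whose core can be isotoped through the ball bounded by the sphere. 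Any residual worry about the band should reduce to the "sphere bounds a ball disjoint from everything" fact from Step 2. Applying this to each sphere in turn gives $\hatfl(\bar C\circ C)=\hatfl(L_0\times[0,2])=\id$.

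\textbf{Step 4: grading.} Zemke's grading formula gives the Alexander shift of each decorated component as a combination of $\chi$ of its $\mathbf{w}$- and $\mathbf{z}$-subregions and $[\Sigma]$-dependent terms. For each annulus split by two arcs into two disks (or rectangles), this shift is zero. The $\mathbf{z}$-region contributes $0$ and the homology class term vanishes in $S^3\times I$. Hence $\hatfl(C)$ preserves the multi-Alexander grading componentwise. The Maslov grading is also preserved, although it is not needed here. Together with Step 3, $\hatfl(C)$ is an injection with left inverse $\hatfl(\bar C)$.
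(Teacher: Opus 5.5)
Your overall strategy is exactly the idea the paper recalls from Zemke: use $\hatfl(\bar C)$ as a left inverse by showing $\hatfl(\bar C\circ C)=\id$. Steps 1 and 4 are fine. The gap is in Step 2, and Step 3 inherits it.

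It is not true that the spheres bound disjoint $3$-balls in the complement of the rest of the double. The sphere $S_i$ coming from $U_i$ bounds $D_i\times[t_b,t_d]$, where $D_i$ is the spanning disk of $U_i$. These balls miss $L_0\times[0,2]$ and each other. However, the tube built from a band $b_j$ meets $D_i\times[t_b,t_d]$ in a circle whenever $b_j$ passes through $D_i$, and these ribbon intersections are precisely what make $L_1$ differ from $L_0$.

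If the balls were clean, you could shrink each $S_i$ into the end of its tube and retract the resulting finger. Then $\bar C\circ C$ would be isotopic rel boundary to the product, and no Floer theory would be needed. That fails in general. Since $\pi_1(X_1)\to\pi_1(Y)$ is onto, the complement of $\bar C\circ C$ has fundamental group $\pi_1(Y)\ast_{\pi_1(X_1)}\pi_1(Y)=\pi_1(Y)$. For the standard ribbon concordance from the unknot to the stevedore knot, this is a ribbon-disk group with nontrivial Alexander module, so it is not $\mathbb{Z}$.

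For the same reason you cannot treat the spheres one at a time. The death of $U_i$ cannot be isotoped below a band/dual-band pair whose band crosses $D_i$. Pushing a tube's core through the ball bounded by $S_i$ is a crossing of the tube with $S_i$, not an isotopy. So deferring ``any residual worry'' to Step 2 defers it to a false statement. What is missing is a computation of $\hatfl(\bar C\circ C)$ that is genuinely insensitive to how the tubes thread these balls. That is where the real content of Zemke's argument lies.

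Separately, the local identities in Step 3 are inconsistent with the gradings. Suppose the newborn unknot carries its own $w$ and $z$ and both saddles are $\mathbf{w}$-bands. Compared with the product, $\Sigma_{\mathbf w}$ becomes an annulus and $\Sigma_{\mathbf z}$ acquires an extra disk (the $\mathbf z$-half of the sphere). Thus $\tilde\chi(\Sigma_{\mathbf w})$ drops by $1$ and $\tilde\chi(\Sigma_{\mathbf z})$ rises by $1$, and Zemke's grading formula in $S^3\times I$ gives an Alexander shift of $-1$. So birth, $\mathbf w$-merge, $\mathbf w$-split, death is not the identity. It is the map of a differently decorated surface from your Step 2 one, in which the sphere lies entirely in the $\mathbf w$-region.

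Realizing that decoration by a movie forces a $\mathbf z$-finger up the tube, so one of the two saddles must be a $\mathbf z$-band. Likewise $\Phi_w$ changes the Alexander grading by one, while birth and death each preserve it. Hence $\mathrm{death}\circ\Phi_w\circ\mathrm{birth}$ cannot equal $\id$ either.
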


Recall that the idea of this proof is to show that if we take $\bar{C}$, the decorated cobordism obtained by reversing $C$, we have $\hatfl(\bar{C}) \circ \hatfl(C) = \id$. 
The mechanics of what makes a proof like this work for a suitable link homology theory is explored in \cite{Kang-linkhomologytheoriesribcon}.

We can also associate a polytope to $\hatfl(L)$, called the \textit{link Floer polytope}, which is defined by 
    \[P(L) = \mathrm{conv}\{s \in \R^n : \hatfl(L, s) \neq 0\}.\]
It was proven in \cite{OzsvathSzabo-thurstonnorm} that if $L$ has no split unknotted component, then 
    \[2P(L) = B_{x^*} + [-1,1]^n\]
(written in this form in Theorem 4.2 of \cite{Kim-secondsmallestfloer}).
Here, $+$ means Minkowski sum.

\section{Proof of the main result and some remarks}
\label{sec:proof}

We begin by proving a strengthened version of a braid axis detection theorem of Martin \cite{Martin-T26detection}, relayed to the author by Fraser Binns.

\begin{proposition}
    \label{prop:braidaxisdetect}
    Let $L$ be an oriented $n$-component link in $S^3$ with a fibered component $K$ such that every component of $L \setminus K$ has nonzero geometric linking with $K$.
    Then the following hold:

    \begin{itemize}
        \item[(a).] There is an equality 
            \[m_K = \frac{1}{2}\min_{\text{Seifert surface $\Sigma$ for $K$}} (2g(\Sigma) + |\Sigma \cap (L \setminus K)|).\]
        \item[(b).] We have 
            \[\rank \hatfl(L, m_K) \geq 2^{n-1}\]
        with equality if and only if $L \setminus K$ is a braid closure in the complement of $K$.
        \item[(c).] In the case that $L \setminus K$ is a braid closure of index $\alpha$ in the complement of $K$, then (a). may be specialized to the equality $m_K = g(K) + \alpha/2$.
    \end{itemize}

\end{proposition}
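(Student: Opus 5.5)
Throughout, pass to the sutured manifold $S^3(L)$, the complement of $\nu L$ with two meridional sutures on each boundary torus. Juhász's results give $\hatfl(L) \cong SFH(S^3(L))$, with the top Alexander grading in the $K$-direction realized by decomposing along a surface representing the Seifert class of $K$. For part (a) I would use the Thurston norm detection of Ozsv\'ath--Szab\'o: the identity $2P(L) = B_{x^*} + [-1,1]^n$ recalled above. Pair both sides with the class $h = \mathrm{PD}[\Sigma]$ dual to a Seifert surface of $K$ punctured by $L\setminus K$. The hypothesis that every other component meets $K$ nontrivially gives that $L$ has no split unknotted component, so the identity applies. Taking the maximum of the $a_K$-coordinate over the polytope then gives
\[2m_K = x([\Sigma]) + \sum_i |\langle h,\mu_i\rangle|.\]
The correction term is $1$ from $\mu_K$ and $0$ from the other meridians in the $K$-coordinate; I would check this bookkeeping carefully, remembering that the relevant pairing is with the coordinate functional $a_K$.

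The next step is to show that the minimum of $\chi_-$ over surfaces in the class $[\Sigma] \in H_2(S^3,L)$ bounded by $K$ equals $\min(2g(\Sigma)-1 + |\Sigma\cap(L\setminus K)|)$. A punctured genus-$g$ Seifert surface meeting $L\setminus K$ in $p$ points has $\chi = 1-2g-p$. So I need that a norm-minimizing representative can be taken connected with a single boundary component on $K$. This uses the standard argument: discard closed components and tubes, and use that the boundary slope on $\partial\nu K$ is the longitude. Then $2m_K = 2g+p$, which is (a).

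For (b), the plan is to decompose $S^3(L)$ along a norm-minimizing surface $S$ realizing (a). By Juhász's decomposition theorem, $\hatfl(L,m_K) \cong SFH(S^3(L)')$, where $S^3(L)'$ is the decomposed sutured manifold. The remaining $n-1$ tori each carry two sutures, and for each of them this contributes a tensor factor $V=\F_2^2$. More precisely, one can first reduce to sutured Floer homology with a single pair of sutures per component, absorbing $2^{n-1}$. Since $S$ is taut, the decomposed manifold is taut, so its $SFH$ is nonzero and the rank is at least $2^{n-1}$. For the equality case I would invoke Juhász's theorem that a taut balanced sutured manifold with $SFH \cong \F_2$ is a product. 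The decomposed manifold is a product exactly when $S^3\setminus\nu L$ fibers with fiber $S$. Given that $K$ is fibered, this holds precisely when $L\setminus K$ can be isotoped transverse to the fibration, that is, when $L\setminus K$ is a braid closure.

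The main obstacle is the "only if" direction: passing from "the complement of $L$ is fibered with fiber a punctured Seifert surface of $K$" to "$L\setminus K$ is braided with respect to the given fibration of $K$". I would handle this by filling in $L\setminus K$. Filling the fibration along the meridional slopes gives a fibration of $S^3\setminus K$ in which the cores are transverse. Uniqueness of fibrations of a fibered knot complement up to isotopy then identifies it with the original fibration. Part (c) follows once the fiber is known: when $L \setminus K$ is braided of index $\alpha$, the fiber is a minimal genus Seifert surface of $K$ punctured $\alpha$ times. Any other representative satisfies $2g+p \ge 2g(K)+\alpha$, since $p \ge \Sigma\cdot B=\alpha$ and $g \ge g(K)$. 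Plugging this into (a) gives $m_K = g(K)+\alpha/2$.
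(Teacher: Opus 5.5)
Your parts (a) and (c) follow the paper: Ozsv\'ath--Szab\'o's norm detection evaluated on the class dual to a punctured Seifert surface gives $2m_K = x + 1$, and then the fiber surface together with $|\Sigma\cap B|\ge \Sigma\cdot B$ gives (c). The problem is in (b). The paper proves the inequality purely with polytopes and cites Martin's Proposition 1 for the equality case. You instead try to get both from a sutured decomposition, and the step that produces the factor $2^{n-1}$ is not justified. In $S^3(L)$ every torus already carries exactly one pair of sutures, so ``reducing to a single pair of sutures per component'' does not describe an operation. Moreover, after decomposing, the tori $T_i$ are not left intact: each is cut along the meridians $\partial S\cap T_i$.

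A surplus parallel pair of sutures, and hence a tensor factor $V$ (adding two parallel copies of a suture tensors $SFH$ with $V$), appears on $T_i$ only when $S$ meets $L_i$ in $p_i\ge 1$ points of the same sign. In that case each of the $p_i$ annuli of $T_i$ joins the $S_+$ side to the $S_-$ side, so it carries an odd number of sutures, while the total is $p_i+2$. With mixed signs the annuli carry even numbers of sutures and nothing evidently splits off. If $S$ misses $L_i$, then $T_i$ survives as a torus with its two sutures, and again nothing splits off. This last case is allowed by the hypothesis. For the Whitehead link, a genus one Seifert surface of one component, disjoint from the other, is norm-minimizing (complexity $1$, the same as the twice-punctured disk), although the components are geometrically linked. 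So $\rank \hatfl(L,m_K)\ge 2^{n-1}$ is not established in the stated generality. Your equality argument inherits the same gap, because concluding $SFH\cong\F_2$ for the reduced decomposed manifold presupposes the $V^{\otimes(n-1)}$ splitting.

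The missing idea for the inequality is already in your part (a). Support functions add under Minkowski sum, so the face of $2P(L)=B_{x^*}+[-1,1]^n$ on which $a_K$ is maximal equals $F+[-1,1]^{n-1}\times\{1\}$ for some face $F$ of $B_{x^*}$. Such a sum has at least $2^{n-1}$ vertices: maximize a generic functional with prescribed signs on $a_1,\dots,a_{n-1}$. Each vertex is a supported grading with $a_K=m_K$. This is the paper's argument (via Kim's Lemma 4.3), and it needs no sutured input.

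Your route to the equality case (product detection, Dehn filling the fibration along the meridional slopes, uniqueness of the fibration of $S^3\setminus\nu K$) is a legitimate substitute for citing Martin, once you first make $S$ coherent. When $\ell(L_i,K)\neq 0$, tube $S$ along arcs of $L_i$ joining consecutive oppositely signed punctures; this leaves $2g+p$ unchanged. After that, the suture count above gives exactly one surplus pair per component, and $S\cup L$ is connected, so $H_2$ of the decomposed manifold vanishes as Juh\'asz's product detection theorem requires. Components with $\ell(L_i,K)=0$, which can never be braided, still need a separate argument ruling out equality.
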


\begin{proof}
    We prove (a) first. 
    Up to reordering components, we can think of $K$ as our $n$th component.
    By Theorem 1.1 of \cite{OzsvathSzabo-thurstonnorm} we have an equality
        \[x(PD[h]) + \sum_{i=1}^\ell|\langle h, \mu_i\rangle| = 2\max_{\{s | \hatfl(L, s) \neq 0\}}|\langle h, s\rangle|\]
    for any $h \in H^1(S^3 \setminus \nu L; \R)$, where $\mu_i$ is the meridian of the $i$th component.
    Identifying $H^1(S^3 \setminus \nu L; \R)$ with $\hom_{\R}(H_1(S^3 \setminus \nu L; \R), \R)$, we can choose $\xi$ to be the cohomology class sending the element $\mu_n\in H_1(S^3 \setminus \nu L; \R)$ to $1$ and all other meridians to zero. 
    Then our equation reduces to
        \[x(PD[\xi]) + 1 = 2\max_{\{s | \hatfl(L, s) \neq 0\}}|\langle \xi, s\rangle| = 2m_K.\]
    The class $PD[\xi]$ is represented by the class of a surface in $H_2(S^3 \setminus \nu L; \R)$ with a boundary component on $\partial \nu K$ intersecting $\mu_n$ once, and the other boundary components representing $(t, 0)$-curves on components of $\partial \nu (L \setminus K)$.
    We can cap off these curves in $S^3 \setminus \nu(L \setminus K)$ to see a surface with a single boundary component on $\partial \nu K$; this component must represent a longitude on $\partial \nu K$.
    It follows that an embedded surface in the class of $PD[\xi]$ is obtained by choosing a Seifert surface $\Sigma$ for $K$ and removing a small disk around every point in $\Sigma \cap (L \setminus K)$.
    The total Euler characteristic of such a surface is given by 
        \[1 - 2g(\Sigma) - |\Sigma \cap (L \setminus K)|,\]
    so that we can view $x(PD[\xi]) + 1$ as the minimum over all embedded surfaces $\Sigma$ in $S^3 \setminus \nu K$ (and bounding a longitude on $\partial \nu K$) of the quantity $2g(\Sigma) + |\Sigma \cap (L \setminus K)|$.

    Next, we tackle the general inequality of (b).
    From Theorem 1.1 of \cite{OzsvathSzabo-thurstonnorm}, we can express $2P(L)$ as the Minkowski sum 
        \[2P(L) = B_{x^*} + [-1,1]^n\]
    where $B_{x^*}$ is the dual Thurston norm unit ball.
    Choose coordinates $(a_1, ..., a_n)$ on $\R^n$, and intersect $2P(L)$ with the coordinate hyperplane $\{a_n = 2m_K\}$.
    This produces the (nonempty) convex polytope
        \[\{(b_1, ..., b_n) + (z_1, ..., z_n) \in B_{x^*} + [-1,1]^n : b_n + z_n = 2m_K\}.\]
    By varying the coordinates $z_1, ..., z_{n-1}$, we explicitly see that this slice of $2P(L)$ must be the Minkowski sum of some convex polytope and $[-1,1]^{n-1}$.
    From Lemma 4.3 of \cite{Kim-secondsmallestfloer}, we see that $2P(L) \cap \{a_n = 2m_K\}$ has at least $2^{n-1}$ vertices.
    Since we chose $\{a_n = 2m_K\}$ to be our coordinate hyperplane, we also see that 
        \[2P(L) \cap \{a_n > 2m_K\} = \emptyset,\]
    making $\{a_n \leq 2m_K\}$ a supporting hyperplane for $2P(L)$.
    The vertices of $2P(L) \cap \{a_n = 2m_K\}$ are then vertices of $2P(L)$.
    As vertices of $2P(L)$ correspond to classes in $\hatfl(L)$, this produces the inequality
        \[\rank \hatfl(L, m_K) \geq 2^{n-1}.\]
    The statement that $\rank \hatfl(L, m_K) = 2^{n-1}$ if and only if $L \setminus K$ is a braid closure in the complement of $K$ is precisely Proposition 1 of \cite{Martin-T26detection}.

    It remains to show (c).
    By assumption, $\ell(L \setminus K, K) = \alpha$.
    Choose a Seifert surface $\Sigma_K$ which is a fiber surface for the fibration of $S^3 \setminus \nu K$.
    Then $\Sigma_K$ is minimal genus (see, e.g., Corollary 2 of \cite{Thurston-norm}).
    By assumption, there is an isotopy of $L \setminus K$ so that it is always transverse to the fibration of $S^3 \setminus \nu K$, meaning that $|\Sigma_K \cap (L \setminus K)| = \alpha$.
    For any other Seifert surface $\Sigma$ for $K$, we have $g(\Sigma) \geq g(\Sigma_K)$, and moreover $|\Sigma \cap (L \setminus K)| \geq \Sigma \cdot (L \setminus K) = \alpha$.
    The claim follows from our proof of (a). 
\end{proof}

\begin{remark}
    \label{rem:generalityofthebraidaxis}
    The eagle-eyed reader will notice that we did not use the fact that $K$ was fibered in our proof of (a). nor in our proof of the inequality in (b)., and the assumption that every component of $L \setminus K$ has nonzero geometric linking with $K$ may be relaxed to the assumptions that $L$ has no trivial components (see Theorem 1.1 of \cite{OzsvathSzabo-thurstonnorm}).
\end{remark}

We can now prove the first major component of Theorem \ref{thm:braidclosureunionbraidaxisisminimal}.
One can view this as a version of the statement that the property of being a braid axis is downward closed under ribbon concordance.

\begin{lemma}
    \label{lma:braidclosurepreservation}
    Let $L_1$ be an oriented $n$-component link in $S^3$ with a component $K_1$ such that $K_1$ is fibered, ribbon concordance minimal, and $L_1 \setminus K_1$ is a braid closure of index $\alpha$ in the complement of $K_1$.
    Suppose that $L_0$ is ribbon concordant to $L_1$, and that the component $K_0$ of $L_0$ corresponds to $K_1$ under this ribbon concordance.
    Then $K_0$ is fibered and $L_0 \setminus K_0$ is a braid closure of index $\alpha$ in the complement of $K_0$.
\end{lemma}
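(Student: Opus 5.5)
The plan is to restrict the ribbon concordance $R: L_0 \to L_1$ to the component joining $K_0$ to $K_1$, use minimality of $K_1$ to identify $K_0$ with $K_1$, and then transfer the braid-closure property through link Floer homology via Proposition \ref{prop:braidaxisdetect}.

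\emph{Step 1: $K_0 \cong K_1$.} The component of $R$ from $K_0$ to $K_1$ is itself a ribbon concordance of knots. Since $K_1$ is ribbon concordance minimal, this forces $K_0 \cong K_1$, so $K_0$ is fibered with $g(K_0) = g(K_1)$. Linking numbers are preserved by concordance, so $\ell(L_0 \setminus K_0, K_0) = \ell(L_1 \setminus K_1, K_1) = \alpha$, with orientations matched componentwise through $R$.

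\emph{Step 2: nonzero geometric linking.} To apply Proposition \ref{prop:braidaxisdetect} to $L_0$, every component of $L_0 \setminus K_0$ must link $K_0$ geometrically nontrivially. Each component of $L_1 \setminus K_1$ is transverse to the fibration, so it has positive algebraic linking with $K_1$. Concordance preserves pairwise linking numbers, so each component of $L_0 \setminus K_0$ has nonzero linking number with $K_0$, hence nonzero geometric linking.

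\emph{Step 3: Floer comparison.} By Theorem \ref{thm:linkfloermonotonicity}, $\hatfl(L_0) \hookrightarrow \hatfl(L_1)$ is a grading-preserving injection. Hence $m_{K_0} \le m_{K_1}$, and if equality holds then
\[\rank \hatfl(L_0, m_{K_0}) \le \rank \hatfl(L_1, m_{K_1}).\]
By Proposition \ref{prop:braidaxisdetect}(c), $m_{K_1} = g(K_1) + \alpha/2$. For $L_0$, Proposition \ref{prop:braidaxisdetect}(a) gives
\[m_{K_0} = \tfrac12 \min_\Sigma \bigl(2g(\Sigma) + |\Sigma \cap (L_0\setminus K_0)|\bigr).\]
Every Seifert surface satisfies $g(\Sigma) \ge g(K_0) = g(K_1)$ and $|\Sigma \cap (L_0 \setminus K_0)| \ge \Sigma\cdot(L_0\setminus K_0) = \alpha$. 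So $m_{K_0} \ge g(K_1) + \alpha/2 = m_{K_1}$, and therefore equality holds. The rank bound then combines with Proposition \ref{prop:braidaxisdetect}(b) for both links:
\[2^{n-1} \le \rank \hatfl(L_0, m_{K_0}) \le \rank \hatfl(L_1, m_{K_1}) = 2^{n-1}.\]
Equality in (b) shows that $L_0 \setminus K_0$ is a braid closure in the complement of $K_0$. Its index is $\Sigma\cdot(L_0\setminus K_0) = \alpha$ by Step 1.

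\emph{Main obstacle.} The delicate point is bookkeeping the multi-Alexander grading. The injection preserves the full multi-grading, so it also preserves the $K$-coordinate, provided $K_0$ and $K_1$ are ordered as the same index and orientations are matched. One must also check that the orientation on $L_0\setminus K_0$ maximizing $\Sigma\cdot B$ is the one inherited through $R$. This holds because each component has positive linking with $K_0$, inherited from $L_1$.
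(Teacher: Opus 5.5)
Your proof is correct and follows the paper's argument essentially step for step. Minimality gives $K_0 \cong K_1$; the grading-preserving injection together with Proposition \ref{prop:braidaxisdetect}(a),(c) squeezes $m_{K_0} = m_{K_1} = g(K_1)+\alpha/2$; and the rank sandwich with Proposition \ref{prop:braidaxisdetect}(b) yields the braid closure. Two small differences: your Step 2 checks the nonzero-linking hypothesis of Proposition \ref{prop:braidaxisdetect} for $L_0$, which the paper leaves implicit, and you read off the index from linking numbers rather than from part (c) and the computed value of $m_{K_0}$; both routes are valid.
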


\begin{proof}
    By assumption, $\ell(L_1 \setminus K_1, K_1) = \alpha$.
    As linking number is a concordance invariant \cite{Milnor-linkgroups}, it follows that $\ell(L_0 \setminus K_0, K_0) = \alpha$ as well.
    Since $K_1$ is ribbon concordance minimal, $K_0 \cong K_1$ as knots in $S^3$.
    Thus, $K_0$ is fibered, and moreover $g(K_0) = g(K_1)$.
    From Proposition \ref{prop:braidaxisdetect} (b). and (c). we have equalities 
        \[m_{K_1} = g(K_1) + \alpha/2, \quad \rank \hatfl(L, m_{K_1}) = 2^{n-1}.\]
    We also have

    \begin{equation*}
        \begin{split}
            g(K_1) + \alpha/2 & = m_{K_1}\\
            & \geq m_{K_0} \quad \quad \text{Theorem \ref{thm:linkfloermonotonicity}}\\
            & = \frac{1}{2}\min_{\text{Seifert surface $\Sigma$ for $K_0$}} (2g(\Sigma) + |\Sigma \cap (L_0 \setminus K_0)| \quad \quad \text{Proposition \ref{prop:braidaxisdetect}}\\
            & \geq \min_{\text{Seifert surface $\Sigma$ for $K_0$}} g(\Sigma) + \frac{\Sigma \cdot (L_0 \setminus K_0)}{2}\\
            & = \min_{\text{Seifert surface $\Sigma$ for $K_0$}} g(\Sigma) + \frac{\ell(L_0 \setminus K_0, K_0)}{2}\\
            & = g(K_0) + \alpha/2\\
            & = g(K_1) + \alpha/2.
        \end{split}
    \end{equation*}

    It follows that $m_{K_0} = m_{K_1}$.
    We also have inequalities
        \[2^{n-1} \leq \rank \hatfl(L_0, m_{K_0}) = \rank \hatfl(L_0, m_{K_1}) \leq \rank \hatfl(L_1, m_{K_1}) = 2^{n-1},\]
    where the first inequality is Proposition \ref{prop:braidaxisdetect} (b). and the second inequality is Theorem \ref{thm:linkfloermonotonicity}.
    Therefore, $\rank \hatfl(L_0, m_{K_0}) = 2^{n-1}$, and $L_0 \setminus K_0$ is a braid closure in the complement of $K_0$ by Proposition \ref{prop:braidaxisdetect} (b).
    Moreover, the index of $L_0 \setminus K_0$ as a braid closure in the complement of $K_0$ is precisely $\alpha$ from Proposition \ref{prop:braidaxisdetect} (c). and our calculation of $m_{K_0}$.
\end{proof}

Our second major component relies on observations of Gordon, and in particular his strategy of using the transfinitely nilpotent condition to restrict ribbon concordance.

\begin{definition}[Page 157 of \cite{Gordon-ribcon}]
    \label{def:transfinitelynilpotent}
    A group $G$ is called \textit{transfinitely nilpotent} if the intersection of the pieces 
        \[[G, G], [[G, G], G], ...\]
    of the lower central series of $G$ terminates to $\{1\}$ at some ordinal.
\end{definition}

Gordon observes that a free group is transfinitely nilpotent \cite{Gordon-ribcon}.
This gives us all of the tools we need to prove the main theorem.

\begin{proof}[Proof of Theorem \ref{thm:braidclosureunionbraidaxisisminimal}]
    $(\Longrightarrow)$
    We start with a link $L_1$ and a fibered component $K_1$ of $L_1$ such that $K_1$ is ribbon concordance minimal and $L_1 \setminus K_1$ is a braid closure of index $\alpha$ in the complement of $K_1$.
    We will henceforth write $L_1 = \hat{\beta}_1 \cup K_1$, with $\hat{\beta}_1$ being our braid closure.

    Suppose that $L_0 \leq L_1$, with the component $K_0$ of $L_0$ being sent to $K_1$ under our ribbon concordance.
    By Lemma \ref{lma:braidclosurepreservation}, we can write $L_0 = \hat{\beta}_0 \cup K_0$, where $\hat{\beta}_0$ is a braid closure of index $\alpha$ in the complement of the fibered link $K_0$.
    We then obtain a ribbon concordance $R: \hat{\beta}_0 \cup K_0 \to \hat{\beta}_1 \cup K_1$ sending $\hat{\beta}_0$ to $\hat{\beta}_1$ and $K_0$ to $K_1$.

    Using Gordon's notation, we set $Y = (S^3 \times [0,1]) \setminus \nu R$ and $X_{0, 1} = S^3 \setminus \nu(L_{0,1})$.
    Recall that the exterior of a link concordance is a ribbon $\Z$-homology cobordism, meaning in particular that we have isomorphisms $H_1(X_0; \Z) \cong H_1(Y; \Z) \cong H_1(X_1; \Z) \cong \Z^n$.
    Under this correspondence, the element $(1, 0, ..., 0) \in H_1(X_0; \Z)$ corresponding to the meridian of $K_0$ is sent to the element $(1, 0, ..., 0) \in H_1(Y; \Z)$ corresponding to the meridian of the component of $R$ between $K_0$ and $K_1$, which is subsequently sent to the $(1, 0, ..., 0) \in H_1(X_1; \Z)$ corresponding to the meridian of $K_1$.
    Let $\widetilde{Y}$ be the infinite cyclic cover of $Y$ determined by the homomorphism $H_1(Y; \Z) \to \Z$ sending $(1, 0, ..., 0)$ to $1$ and all other generators to $0$.
    Then, the correspondences we noted above give us $\partial \widetilde{X}_0, \partial \widetilde{X}_1 \subset \partial \widetilde{Y}$, where $\widetilde{X}_{0,1}$ is the infinite cyclic cover of $X_{0,1}$ constructed by the same homomorphism.  
    
    As we have Lemma \ref{lma:pi1injsurj}, the first paragraph of Lemma 3.2 in \cite{Gordon-ribcon} carries through exactly to give $H_2(\widetilde{Y}; \Z) = 0$.
    This is also Step 2 of the proof of Theorem 1.9 in \cite{Dunkerley-linkribcon}, but we caution the reader that $Y$'s represent $3$-manifolds and $X$'s represent $4$-manifolds in Dunkerley's notation.

    As $\hat{\beta}_0, \hat{\beta}_1$ have the same index, we know that $\widetilde{X}_0, \widetilde{X}_1$ are both homeomorphic to $\Sigma_\alpha \times \R$, where here $\Sigma_\alpha$ denotes a surface of genus $g(K) = g(K_0)$ with one boundary component and $\alpha$ punctures.
    In particular, we have an equality $\dim H_1(\widetilde{X}_0; \Q) = \dim H_1(\widetilde{X}_1; \Q)$.
    Since $(\iota_1)_*: \pi_1(X_1) \rightarrow \pi_1(Y)$ is surjective by Lemma \ref{lma:pi1injsurj}, we know that there is a surjection $H_1(\widetilde{X}_1; \Q) \rightarrow H_1(\widetilde{Y}; \Q)$, and therefore a surjection $H_1(\partial \widetilde{Y}; \Q) \rightarrow H_1(\widetilde{Y}; \Q)$.
    With access to a version of Poincar\'e duality for infinite cyclic covers (see Milnor \cite{Milnor-infcyccov}), the induced map $i^* : H^1(\widetilde{Y}; \Q) \to H^1(\partial\widetilde{Y}; \Q)$ is dual to the boundary map $\delta: H^1(\partial \widetilde{Y}; \Q) \to H^2(\widetilde{Y}, \partial \widetilde{Y}; \Q)$ with respect to Milnor's pairing.
    We then obtain a version of the ``half lives, half dies'' theorem, which is to say that there are equalities
        \[\frac{1}{2}\dim H_1(\partial \widetilde{Y}; \Q) = \dim \ker(H_1(\partial \widetilde{Y}; \Q) \rightarrow H_1(\widetilde{Y}; \Q)) = \dim H_1(\partial \widetilde{Y}; \Q) - \dim H_1(\widetilde{Y}; \Q).\]
    It follows that

    \begin{equation*}
        \begin{split}
            \dim H_1(\widetilde{Y}, \Q) & = \frac{1}{2}\dim H_1(\partial \widetilde{Y}; \Q)\\
            & = \frac{\dim H_1(\widetilde{X}_1; \Q) + \dim H_1(\widetilde{X}_0; \Q)}{2}\\
            & = \dim H_1(\widetilde{X}_1; \Q).
        \end{split}
    \end{equation*}

    Thus the surjection $H_1(\widetilde{X}_1; \Q) \rightarrow H_1(\widetilde{Y}; \Q)$ has to be an isomorphism.
    We know that $\pi_1(\widetilde{X}_1) \cong \pi_1(\Sigma_\alpha)$ is free, and is therefore transfinitely nilpotent.
    The third paragraph of Lemma 3.2 in \cite{Gordon-ribcon} tells us that the inclusion $(\widetilde{\iota}_1)_*: \pi_1(\widetilde{X}_1) \rightarrow \pi_1(\widetilde{Y})$ is injective, while its surjectivity follows from Lemma \ref{lma:pi1injsurj}. 
    As Dunkerley notes, the fact that $(\iota_1)_*$ is an isomorphism follows from the five lemma:

    \begin{center}
        \begin{tikzcd}
1 \arrow[r] \arrow[d] & \pi_1(\widetilde{X}_1) \arrow[r, hook] \arrow[d, "(\widetilde{\iota}_1)_*"] & \pi_1(X) \arrow[r, two heads] \arrow[d, "(\iota_1)_*"] & \mathbb{Z} \arrow[r] \arrow[d, "\cong"] & 1 \arrow[d] \\
1 \arrow[r]           & \pi_1(\widetilde{Y}) \arrow[r, hook]                                        & \pi_1(Y) \arrow[r, two heads]                          & \mathbb{Z} \arrow[r]                    & 1          
        \end{tikzcd}
    \end{center}    

    By Lemma \ref{lma:pi1injsurj}, the map $(\iota_0)_*: \pi_1(X_0) \to \pi_1(Y) \cong \pi_1(X_1)$ is an injection.
    Dunkerley proves in careful detail in Case i of Theorem 1.1 of \cite{Dunkerley-linkribcon} that this injection respects the peripheral structure of $L_0, L_1$.
    The main idea is featured in the fourth paragraph of Lemma 3.2 in \cite{Gordon-ribcon}: 
    There is a homeomorphism $\partial \nu R \cong \sqcup_{i=1}^n T^2 \times I$ providing an identification $g$ between $\partial X_0$ and $\partial X_1$, whereby $(\iota_0)_*|_{\partial X_0} = g_*$.
    Moreover, Dunkerley's argument demonstrates that $(\iota_0)_*$ needs to be an isomorphism in the case that there is an identification $\pi_1(Y) \cong \pi_1(X_1)$.
    
    Since $X_0, X_1$ are both complements of non-split links and are thus Haken, we can use Corollary 6.4 of \cite{Waldhausen-irred3man} to conclude that the map $(\iota_0)_* : \pi_1(X_0) \to \pi_1(X_1)$ is induced by a covering map $f: X_0 \to X_1$.
    Then $f_*$ is an isomorphism, meaning our covering is actually a homeomorphism respecting the peripheral structure of $\partial X_0, \partial X_1$.
    It follows that $L_0 \cong L_1$, as desired.

    $(\Longleftarrow)$
    We will show that if a fibered knot $K$ is not ribbon concordance minimal, then there exists a braid closure $\hat{\beta}$ in $S^3 \setminus K$ (in fact, the proof will produce many) such that $K \cup \hat{\beta}$ is not ribbon concordance minimal.
    
    Let $R: K' \to K$ be a ribbon concordance such that $K' \neq K$, and set $R_t = R \cap (S^3 \times \{t\})$.
    Choose a path $\tau: [0,1] \to S^3 \times [0,1]$ on $R$ such that $\tau(t) \in R_t$, and $\tau$ misses all critical points of $R$.
    Let $D_t$ be a family of small disks such that $D_t \subset S^3 \times \{t\}$ intersects $R_t$ in a single point at $\tau(t)$.
    We can then define the annulus $S = \bigcup_t \partial D_t$, whereby we obtain a ribbon concordance $R \cup S : K' \cup \partial D_0 \to K \cup \partial D_1$.

    We know that there is a diffeomorphism $f: S^1 \times D^2 \to \nu \partial D_1$.
    Let $\hat{\beta}$ be any braid closure in $S^1 \times D^2$, and identify $\hat{\beta}$ with its image under $f$.
    Then $\hat{\beta}$ is a braid closure in $S^3 \setminus K$ (it hugs a meridian for $K$), and so we set $L = K \cup \hat{\beta}$.
    We actually have a family of diffeomorphisms $f_t: S^1 \times D^2 \to \nu \partial D_t$, and so we get a new annulus $S'$ by taking 
        \[S' = \bigcup_t f_t(\hat{\beta}).\]
    When all is said and done, we have a ribbon concordance $R \cup S' : K' \cup \hat{\beta} \to K \cup \hat{\beta}$ (again identifying $\hat{\beta}$ with its image under the appropriate $f_t$).

    Suppose for contradiction that $K' \cup \hat{\beta} = K \cup \hat{\beta}$.
    Then the injection of Theorem \ref{thm:linkfloermonotonicity} induces an isomorphism $\hatfl(K' \cup \hat{\beta}) \cong \hatfl(K \cup \hat{\beta})$.
    Since $K' \leq K$ and $K$ is fibered, $K'$ is fibered by combined results of \cite{Silver-knotlikegroupsribcon} and \cite{Kochloukova-knotlikegroups}.
    We know that $\partial D_0$ is a meridian for $K'$, so $\hat{\beta}$ is braided in the complement of $K'$ as well.
    Applying Proposition \ref{prop:braidaxisdetect} (c). and the argument of Lemma \ref{lma:braidclosurepreservation}, it follows that $g(K') = g(K)$ by comparing maximally supported Alexander gradings associated to $K', K$.
    However, this means that $K' = K$ by Lemma 3.4 of \cite{Gordon-ribcon}, a contradiction.
    Thus, $K' \cup \hat{\beta} \neq K \cup \hat{\beta}$, and $K \cup \hat{\beta}$ is not ribbon concordance minimal.
\end{proof}

Let $Y_-, Y_+$ be compact $3$-manifolds with empty or toroidal boundary.
A recent result of Sun \cite{Sun-fiberedclasses} shows that if there is a ribbon $\Q$-homology cobordism (see \cite{DaLidVVShea-ribhomcob}) from $Y_-$ to $Y_+$, then the fibered classes of $Y_+$ are identified with a subset of the fibered classes of $Y_-$.
Qiuyu Ren informed the author that this result, together with a generalization of Theorem 1.7 of \cite{AgolRen-fiberedribcon} to the case of fibered links, should offer a road to an alternate proof of the forward direction of Theorem \ref{thm:braidclosureunionbraidaxisisminimal}.
Given a link $L$ as in the statement of Theorem \ref{thm:braidclosureunionbraidaxisisminimal}, a fiber of $L$ is a punctured genus $g(K)$ surface.
If $L' \leq L$, then $L'$ is fibered by Corollary 1.2 of \cite{Sun-fiberedclasses}.
Moreover, the monodromy of $L$ compresses to the monodromy of $L'$.
When $L' \neq L$, filling in the punctures of our surface gives a nontrivial compression of the monodromy of $K$.
We plan to explore this line of reasoning in future work.

\section{Acknowledgments}

The author would like to thank Ian Agol and Melissa Zhang for their guidance and for many inspiring conversations on ribbon concordance.
Special thanks also to Fraser Binns for making the author aware of stronger versions of braid axis detection theorems.
We are also grateful to Gary Dunkerley and Qiuyu Ren for insightful correspondence, and to Wren Burrill for helpful feedback.

\bibliographystyle{alpha}
\bibliography{main}

\end{document}